\documentclass{amsart}

\usepackage{amsmath, amssymb, amsthm}
\usepackage{hyperref}
\usepackage{cleveref}
\usepackage{thmtools}
\usepackage{mathtools}
\usepackage{xcolor}
\usepackage[new]{old-arrows}
\usepackage{tikz-cd}

\newtheorem{theorem}{Theorem}[section]
\newtheorem{glem}[theorem]{Ghost Lemma}

\newtheorem{proposition}[theorem]{Proposition}

\newtheorem{lemma}[theorem]{Lemma}

\newtheorem*{theorem*}{Theorem}
\newtheorem*{corollary*}{Corollary}
\newtheorem*{lemma*}{Lemma}
\newtheorem*{proposition*}{Proposition}

\theoremstyle{definition}

\newtheorem{remark}[theorem]{Remark}
\newtheorem{example}[theorem]{Example}

\newtheorem{construction}[theorem]{Construction}
\newtheorem{chunk}[theorem]{}
\newtheorem*{chunk*}{}

\newcommand{\cat}[2][R]{\mathsf{#2}(#1)}
\newcommand{\class}[1]{\mathsf{#1}}
\newcommand{\catd}[1][R]{\mathsf{D}(#1)}
\newcommand{\catdbf}[1][R]{\mathsf{D_b^f}(#1)}
\newcommand{\level}[3][R]{\operatorname{level}_{#1}^{#2}#3}  
\newcommand{\mfp}{\mathfrak{p}}
\newcommand{\mfm}{\mathfrak{m}}
\newcommand{\ext}[4][R]{\operatorname{Ext}_{#1}^{#2}(#3,#4)}
\newcommand{\gdim}[2][R]{\operatorname{Gdim}_{#1}#2}
\newcommand{\GPdim}[2][R]{\operatorname{Gpd}_{#1}#2}
\newcommand{\GFdim}[2][R]{\operatorname{Gfd}_{#1}#2}
\newcommand{\pdim}[2][R]{\operatorname{pd}_{#1}#2}
\newcommand{\fdim}[2][R]{\operatorname{fd}_{#1}#2}
\newcommand{\idim}[2][R]{\operatorname{id}_{#1}#2}
\newcommand{\h}[2][]{\operatorname{H}_{#1}(#2)}
\newcommand{\cy}[2][]{\operatorname{Z}_{#1}(#2)}
\newcommand{\co}[2][]{\operatorname{C}_{#1}(#2)}
\newcommand{\bo}[2][]{\operatorname{B}_{#1}(#2)}
\newcommand{\Hom}[3][R]{\operatorname{Hom}_{#1}(#2,#3)}

\newcommand{\erk}{\operatorname{E}_{R}(k)}
\newcommand{\im}{\operatorname{im}}
\newcommand{\coker}{\operatorname{coker}}
\newcommand{\cone}{\operatorname{cone}}
\newcommand{\depth}{\operatorname{depth}}

\newcommand{\Gid}{\operatorname{Gid}_R}

\newcommand{\catdb}[1][R]{\mathsf{D_b}(#1)}


\title[G-Levels of Perfect Complexes and a Bass formula for levels]{G-Levels of Perfect Complexes\\ and a Bass formula for levels}

\author[L.W.\,Christensen]{Lars Winther Christensen}
\address{Texas Tech University, TX 79409, U.S.A.}
\email{lars.w.christensen@ttu.edu}
\urladdr{https://www.math.ttu.edu/~lchriste}

\author[A.\,Kekkou]{Antonia Kekkou}
\address{University of Utah, UT 84112, U.S.A.}
\email{kekkou@math.utah.edu}
\urladdr{https://sites.google.com/view/antonia-kekkou/}

\author[J.\,Lyle]{Justin Lyle}
\address{Auburn University, AL 36849, U.S.A.}
\email{jll0107@auburn.edu}
\urladdr{https://jlyle42.github.io/justinlyle}

\author[Z.\,Nason]{Zachary Nason}
\address{University of Nebraska, NE 68588, U.S.A.}
\email{znason2@huskers.unl.edu}
\urladdr{https://zach-nason.github.io/}

\author[A.J.\,Soto Levins]{Andrew J. Soto Levins}
\address{Texas Tech University, TX 79409, U.S.A.}
\email{ansotole@ttu.edu}
\urladdr{https://sites.google.com/view/andrewjsotolevins}

\date{\today}

\subjclass{13D09; 13H10}

\keywords{Bass formula, Gorenstein ring, level, perfect complex}

\thanks{L.W.C.\ was partly supported by Simons Foundation collaboration grant 962956}
\thanks{A.K. was partly supported by National Science Foundation grant DMS-200985}
\begin{document}

\begin{abstract}
  We prove that a commutative noetherian ring $R$ is Gorenstein of
  dimension at most $d$ if $d+1$ is an upper bound on the G-levels of
  perfect $R$-complexes. For $R$ local, we prove a formula for levels,
  with respect to injective or Gorenstein injective $R$-modules, of
  $R$-complexes with finitely generated homology; it mimics Bass'
  classic formula for injective dimension of finitely generated
  $R$-modules.
\end{abstract}

\maketitle

\section{Introduction}

\noindent
Throughout this paper, $R$ denotes a commutative noetherian ring. Let
$\class{C}$ be a collection of objects in $\catdb$, the bounded
derived category over $R$. The number of mapping cones needed, up to
summands, finite direct sums, and shifts, to build an $R$-complex $M$
from $\class{C}$ is known as the level of $M$ with respect to
$\class{C}$ and denoted by $\level{\class{C}}{M}$. That is, levels
stratify the derived category of $R$ using its triangulated
structure. Levels with respect to the class $\{ R \}$, sensibly known
as $R$-levels, have been studied extensively. From the first study of
$R$-levels by Avramov, Buchweitz, Iyengar, and Miller
\cite{Avramov/Buchweitz/Iyengar/Miller:2010} it is known that $R$ is
regular of Krull dimension $d$ if and only if every complex in
$\catdbf$ has $R$-level at most $d + 1$.

In this paper, we are concerned with levels with respect to the
collection of finitely generated Gorenstein projective modules,
denoted $\cat{G}$, and the collections of Gorenstein injective and
Gorenstein flat modules, denoted $\cat{GI}$ and $\cat{GF}$. A first
major result in this area was obtained by Aihara, Araya, Iyama,
Takahashi, and Yoshiwaki \cite{AAITY-14}, who proved that if $R$ is
Gorenstein of finite Krull dimension, then the $\cat{G}$-levels of
complexes in $\catdbf$ are bounded above by $\max\{2, \dim R + 1\}$.
A converse was established by Awadalla and Marley
\cite{Awadalla/Marley:2022}, who proved that $R$ is Gorenstein of
finite Krull dimension if there is an upper bound on the
$\cat{G}$-levels of complexes in $\catdbf$.

Our first main result, Theorem \ref{cmgperfuacimpliesgorenstein},
improves the result of Awadalla and Marley, as we show that $R$ is
Gorenstein of finite Krull dimension if there is an upper bound on the
$\cat{G}$-levels of perfect $R$-complexes. This parallels a well-known
characterization of regular rings; see for example Krause
\cite{Krause:2024}. In \Cref{boundForOneDimensionalRings} we combine
Theorem \ref{cmgperfuacimpliesgorenstein} with results from
\cite{AAITY-14} to give a comprehensive characterization of Gorenstein
rings in terms of $\cat{G}$-levels.

Our second main result, Theorem \ref{GIInequality}, establishes an
upper bound for the $\cat{GI}$-levels of complexes in $\catdb$.  For
$R$ local of positive depth, we obtain in \Cref{GIBass} a formula for
the $\cat{GI}$-level of complexes in $\catdbf$ whose homology modules
have finite Gorenstein injective dimension. This formula mimics Bass's
classic formula for injective dimension of finitely generated
$R$-modules.

We close by examining $\cat{GF} $-levels of complexes in $\catdb$. It
follows from \cite{AAITY-14} that the $\cat{GF}$-level of a complex
$M$ in $\catdb$ is at most $\max\{2, \GFdim[R]{\h{M}}\}$, where
$\h{M}$ is taken to be a module. We show that ``$\max\{2,$" is needed
by constructing a complex of $\cat{GF}$-level $2$ whose homology
modules are Gorenstein flat. As the underlying ring $R$ is regular,
this is also a complex with flat homology modules whose level with
respect to the class of flat $R$-modules is, nevertheless, $2$.

\section{G-Levels of perfect complexes}

\noindent
We start by introducing the most central notation; for any unexplained
notation and terminology we refer the reader to Christensen, Foxby,
and Holm \cite{Christensen/Foxby/Holm:2024}.

For an $R$-complex
\[M\coloneqq \cdots \longrightarrow M_{i+1}
  \xrightarrow{\partial^M_{i+1}} M_i \xrightarrow{\partial^M_i}
  M_{i-1} \longrightarrow \cdots\] and every integer $i$ set
$\cy[i]{M}\coloneqq \ker(\partial^M_i)$,
$\bo[i]{M}\coloneqq\im(\partial^M_{i+1})$,
$\co[i]{M}\coloneqq \coker(\partial^M_{i+1})$, and
$\h[i]{M}\coloneqq \cy[i]{M}/\bo[i]{M}$.

\begin{chunk}
  Let $\class{C}$ be a collection of objects in the derived category
  $\catd$. Recall from \cite{Avramov/Buchweitz/Iyengar/Miller:2010}
  that the $\class{C}$-level of a complex $M$ in $\catd$, denoted
  $\level{\class{C}}{M}$, is defined as:
  \begin{enumerate}
  \item[$(1)$] $\level{\class{C}}{M}=0$ if $M$ is $0$ in $\catd$.
  \item[$(2)$] $\level{\class{C}}{M}=1$ if $M$ is nonzero, but can be
    built from objects in $\class{C}$ using shifts, summands, and
    finite direct sums.
  \item[$(3)$] For $n>1$, $\level{\class{C}}{M}=n$ if $n$ is the
    infimum of $i$ such that there is a triangle
    \[K\longrightarrow L\oplus M\longrightarrow N\longrightarrow\]
    with $\level{\class{C}}{K}=1$ and $\level{\class{C}}{N}=i-1$.
  \end{enumerate}
\end{chunk}

\begin{chunk}
  Let $\class{C}$ be a collection of objects in $\catd$. A morphism
  $\alpha\colon M\rightarrow N$ in $\catd$ is $\class{C}$-ghost if the
  induced maps
  \[\ext{n}{C}{M}\longrightarrow\ext{n}{C}{N}\]
  are zero for all integers $n$ and all objects $C$ in $\class{C}$.
\end{chunk}

The next result was first proved by Kelly \cite[Theorem
3]{Kelly:1965}.

\begin{glem}
  \label{ghostlemma}
  Let $\class{C}$ be a collection of objects from $\catd$ and
  $\alpha_{i}\colon M_{i}\rightarrow M_{i+1}$ for $0\leq i\leq n-1$ a
  sequence of morphisms in $\catd$ such that
  $\alpha_{n-1}\alpha_{n-2}\cdots \alpha_{0}$ is a nonzero morphism in
  $\catd$. If each $\alpha_{i}$ is $\class{C}$-ghost, then one has
  $\level{\class{C}}{M_{0}}\geq n+1$.
\end{glem}

Let $\cat{G}$ denote the collection of finitely generated Gorenstein
projective $R$-modules.

\begin{lemma}
  \label{vanishinglemmawithbound}
  Let $M$ be a finitely generated $R$-module. If there exists an
  integer $b \ge1$ such that $\ext{n}{G}{M} = 0$ holds for all
  $n \ge b$ and all $G$ in $\cat{G}$, then $\ext{n}{G}{M} = 0$ holds
  for all $n \ge 1$ and all $G$ in $\cat{G}$.
\end{lemma}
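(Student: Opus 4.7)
The plan is to use the classical dimension-shifting technique made available by the fact that a finitely generated Gorenstein projective module sits inside a totally acyclic complex of finitely generated projectives that extends infinitely in both directions.

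First I would recall that each $G \in \cat{G}$ is the cokernel of a differential in a totally acyclic complex $P_{\bullet}$ of finitely generated projective $R$-modules. From $P_{\bullet}$, truncating to the right of $G$, one extracts for every integer $k \ge 0$ an exact sequence
\[
  0 \longrightarrow G \longrightarrow P_{-1} \longrightarrow P_{-2} \longrightarrow \cdots \longrightarrow P_{-k} \longrightarrow G^{(k)} \longrightarrow 0
\]
in which each $P_{-i}$ is a finitely generated projective and the cokernel $G^{(k)}$ is itself a cokernel of a differential in a totally acyclic complex of finitely generated projectives, hence $G^{(k)} \in \cat{G}$.

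Next I would splice this long exact sequence into $k$ short exact sequences and apply $\Hom{-}{M}$. Since $\ext{n}{P_{-i}}{M} = 0$ for all $n \ge 1$ (as each $P_{-i}$ is projective), the long exact sequences give an isomorphism
\[
  \ext{n}{G}{M} \;\cong\; \ext{n+k}{G^{(k)}}{M}
\]
for every $n \ge 1$ and every $k \ge 0$.

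Finally, given any $n \ge 1$, I would choose $k$ so large that $n + k \ge b$. Since $G^{(k)} \in \cat{G}$, the hypothesis gives $\ext{n+k}{G^{(k)}}{M} = 0$, and therefore $\ext{n}{G}{M} = 0$. As $G \in \cat{G}$ and $n \ge 1$ were arbitrary, this proves the claim. There is no real obstacle here; the only point that requires care is verifying that the cokernels $G^{(k)}$ obtained by translating the totally acyclic resolution remain Gorenstein projective, which is immediate from the definition since the shifted complex is again totally acyclic.
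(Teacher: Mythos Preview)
Your proof is correct and is essentially the same as the paper's: both exploit that a module $G\in\cat{G}$ is a $k^{\textnormal{th}}$ syzygy of another module in $\cat{G}$, then dimension-shift $\ext{n}{G}{M}\cong\ext{n+k}{G^{(k)}}{M}$ into the range $n+k\ge b$ where the hypothesis applies. The only cosmetic difference is that the paper names the cosyzygy $G'$ and phrases the argument by contradiction, whereas you construct $G^{(k)}$ explicitly from the totally acyclic complex and argue directly.
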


\begin{proof}
  Assume that $\ext{n}{G}{M} = 0$ holds for all $n \ge b$ and all $G$
  in $\cat{G}$, and assume towards a contradiction that one has
  $\ext{n}{G}{M} \ne 0$ for some $n \ge 1$ and some module $G$ from
  $\cat{G}$. As $G$ is a $b^{\textnormal{th}}$ syzygy of a module $G'$
  in $\cat{G}$, one has
  \begin{equation*}
    \ext{b+n}{G'}{M} \cong \ext{n}{G}{M} \ne 0\,,
  \end{equation*}
  a contradiction.
\end{proof}

Ideas for the proof of the next result come from \cite[Theorem
3.3]{Awadalla/Marley:2022} and \cite[Proposition A.1.2]{Krause:2024}.

\begin{theorem}
  \label{improvedcmgperfuacimpliesgorenstein}
  Let $R$ be a commutative noetherian local ring. If there exists an
  integer $\ell$ such that $\level{\cat{G}}{P}\leq \ell$ holds for
  every perfect $R$-complex $P$, then $R$ is Gorenstein.
\end{theorem}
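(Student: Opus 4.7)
The plan is to prove the contrapositive: assume $R$ is not Gorenstein, and construct, for every $n \geq 1$, a perfect $R$-complex $P_n$ with $\level{\cat{G}}{P_n} \geq n+1$; this contradicts the bound $\ell$ once $n \geq \ell$. The engine is the Ghost Lemma, which reduces the task to producing a sequence of $\cat{G}$-ghost morphisms
\[
  P_n = X_0 \xrightarrow{\alpha_1} X_1 \xrightarrow{\alpha_2} \cdots \xrightarrow{\alpha_n} X_n
\]
in $\catdbf$ whose composition is nonzero.

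The input from non-Gorensteinness is as follows. Since $R$ is local and not Gorenstein, $\gdim{k} = \infty$, and by the Auslander--Bridger formula of Chunk~\ref{auslanderbridger} there exist finitely generated modules $M$ of infinite Gorenstein projective dimension. The strategy is to exploit this to exhibit Ext-non-vanishing: some $G \in \cat{G}$ and arbitrarily large $n$ with $\ext{n}{G}{M} \ne 0$. Lemma~\ref{vanishinglemmawithbound} plays a crucial role here: once \emph{any} pair $(G, n)$ with $n \geq 1$ and $\ext{n}{G}{M} \neq 0$ is identified, the contrapositive of the lemma forces such nonvanishing in arbitrarily high degrees, providing the witnesses needed to detect nonzero composites in the ghost chains.

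For the construction of the chain itself, I would follow \cite[Proposition A.1.2]{Krause:2024} in the spirit of \cite[Theorem 3.3]{Awadalla/Marley:2022}. Starting from a minimal free resolution $F_\bullet \twoheadrightarrow M$, the short exact sequences $0 \to \Omega^{i+1}M \to F_i \to \Omega^i M \to 0$ provide morphisms in $\catdbf$ connecting consecutive syzygies; their Yoneda composition realizes a designated nonzero Ext class. The defining vanishing $\ext{j}{G}{F} = 0$ for $j \geq 1$, $G \in \cat{G}$, and $F$ free is the mechanism by which these morphisms become $\cat{G}$-ghost in the relevant degrees. The first object of the chain is arranged to be a perfect complex---for example, a Koszul complex on a system of parameters, or an appropriate perfect truncation of $F_\bullet$---yielding $P_n$.

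The main obstacle is the simultaneous control over three demands: $\cat{G}$-ghostness against \emph{every} $G \in \cat{G}$, not just the one furnished by the Ext non-vanishing step; preservation of a nonzero composition in $\catdbf$; and origination of the chain from a perfect complex, as opposed to from $M$ itself (as in \cite{Awadalla/Marley:2022}). The last point is precisely the technical refinement the present theorem affords over the existing literature. Once the chain of $n$ ghost morphisms with nonzero composition is assembled, the Ghost Lemma gives $\level{\cat{G}}{P_n} \geq n+1$, contradicting the hypothesis and completing the argument.
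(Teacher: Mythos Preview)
Your proposal has a genuine gap, and it lies in an internal tension between the two things you need from the chain: $\cat{G}$-ghostness of each map and nonvanishing of the composite.

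Take the connecting morphisms $\Omega^i M \to \Sigma\,\Omega^{i+1}M$ coming from $0 \to \Omega^{i+1}M \to F_i \to \Omega^i M \to 0$. Because $\ext{j}{G}{F_i}=0$ for $j\ge 1$ and $G\in\cat{G}$, the long exact sequence shows that the induced map $\ext{n}{G}{\Omega^i M}\to \ext{n+1}{G}{\Omega^{i+1}M}$ is an \emph{isomorphism} for every $n\ge 1$, not the zero map. So these morphisms are $\cat{G}$-ghost only when $\ext{\ge 1}{G}{M}=0$ for every $G\in\cat{G}$; but in that case there is no ``designated nonzero Ext class'' in $\ext{n}{G}{M}$ with which to certify that the composite is nonzero. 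The same dichotomy persists if you pass to brutal truncations $X^n$ of $F$: the computation of $\ext{*}{G}{X^n}$ rests on $\ext{\ge 1}{G}{\co[n]{F}}=0$, i.e.\ again on $\ext{\ge 1}{G}{M}=0$. Lemma~\ref{vanishinglemmawithbound} does not break the impasse; it only says the two alternatives are exhaustive. You also never explain why a module $M$ of infinite Gorenstein dimension must admit some $G\in\cat{G}$ with $\ext{n}{G}{M}\ne 0$---that is not what infinite G-dimension means.

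The paper's proof sidesteps this tension by reversing the logic. It does not try to exhibit a nonzero composite; instead it chooses the target complex to be $D=\Hom{K}{\erk}$, whose finite injective dimension forces $\ext{\ge 1}{G}{\co[n]{F}}=0$ via Lemma~\ref{vanishinglemmawithbound}, so the truncation maps $X^n\to X^{n+1}$ are genuinely $\cat{G}$-ghost. The level hypothesis then \emph{forces} the composite to be zero, hence null-homotopic, and the null-homotopy is unwound to show $\bo[\ell]{F}$ is projective. Since finite projective dimension of $D$ characterizes Gorensteinness, this closes the argument. The essential idea you are missing is this choice of $D$ and the use of the hypothesis to conclude the composite vanishes, rather than attempting to prove it does not.
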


\begin{proof}
  Without loss of generality one can assume that $\ell$ is at least
  $2$. Let $k$ be the residue field of $R$, let $K$ be the Koszul
  complex on a minimal set of generators for the maximal ideal of $R$,
  and set $D \coloneqq \Hom{K}{\erk}$. Then $D$ is a bounded complex
  of injective $R$-modules and has finite length homology. To show
  that $R$ is Gorenstein, it is by \cite[Corollary
  19.5.12]{Christensen/Foxby/Holm:2024} enough to show that $D$ has
  finite projective dimension over $R$.

  To this end, let $F\xrightarrow{\simeq} D$ be a semi-free resolution
  over $R$ with $F$ degreewise finitely generated. For
  $n \ge \sup\h{D}$, one has
  $\pdim[R]{D} \le \pdim[R]{\co[n]{F}} + n$, so it suffices to fix an
  integer $s \ge \sup\h{D}$ and show that $\co[s]{F}$ has finite
  projective dimension; after a shift one can take $s = 0$.  As the
  injective dimension of $D$ over $R$ is finite, it follows that for
  every $R$-module $G$ one has $\ext{m}{G}{D} = 0$ for all
  $m>\idim{D}$. Thus, for every $n \ge 0$ and $G$ in $\cat{G}$, there
  is an integer $b_n$ such that
  $\ext{m}{G}{\co[n]{F}} = \ext{m+n}{G}{D} = 0$ for all $m \ge b_n$,
  see Christensen, Frankild, and Holm \cite[Corollary
  2.10]{Christensen/Frankild/Holm:2006}, whence one has
  $\ext{m}{G}{\co[n]{F}}=0$ for all $m \ge 1$ and all $G\in\cat{G}$ by
  Lemma \ref{vanishinglemmawithbound}.

  For $n \ge 0$, set
  \[X^{n} \coloneqq 0\longrightarrow
    F_{\ell+n}\xrightarrow{\partial_{\ell+n}}\cdots\xrightarrow{\partial_{n+1}}
    F_{n}\longrightarrow 0\,.\] Notice that we have an exact sequence
  of $R$-complexes
  \[
    0\longrightarrow \Sigma^{\ell+n}\co[\ell+n+1]{F} \longrightarrow
    X^{n}\longrightarrow \Sigma^{n}\co[n]{F} \longrightarrow 0\,,\]
  and from the associated exact sequence of Ext modules one gets for
  every $G\in\cat{G}$
  \begin{equation}
    \tag{$\ast$}
    \ext{m}{G}{X^{n}}=0 \quad\text{for}\quad m\neq -(\ell+n)  \text{ and }  m\neq -n\,.
  \end{equation}
  Now, consider the canonical morphisms
  $\alpha^{n}\colon X^{n}\rightarrow X^{n+1}$ for
  $n=0,\dots,\ell-1$. It follows from $(\ast)$ that they are all
  $\cat{G}$-ghost. Therefore, if the composite morphism
  $\alpha \coloneqq \alpha^{\ell-1}\alpha^{\ell-2}\cdots\alpha^{0}$ is
  nonzero in $\catdbf$, then Ghost Lemma \ref{ghostlemma} yields
  \[\level{\cat{G}}{X^{0}}\geq \ell+1\,,\]
  which contradicts the assumption. Thus, $\alpha$ is a zero morphism
  in $\catdbf$. Since $\alpha$ is a morphism of semi-projective
  $R$-complexes it is null-homotopic; in particular there exist
  homomorphisms
  \[\sigma_{\ell}\colon F_{\ell}\longrightarrow F_{\ell+1}
    \quad\text{and}\quad \sigma_{\ell-1}\colon
    F_{\ell-1}\longrightarrow F_{\ell}\] such that
  \[1^{F_{\ell}} =\alpha_{\ell} = \sigma_{\ell-1}\partial_{\ell} +
    \partial_{\ell+1}\sigma_\ell\,.\] 
  Consider the composite
  \[\bo[\ell]{F} \longhookrightarrow F_{\ell}
    \xrightarrow{\sigma_{\ell}}
    F_{\ell+1}\xrightarrow{\partial_{\ell+1}} \bo[\ell]{F}\,.\] For
  $x$ in $\bo[\ell]{F}$, one has
  \[x = 1^{F_\ell}(x) = \sigma_{\ell-1}\partial_{\ell}(x) +
    \partial_{\ell+1}\sigma_\ell(x) =
    \partial_{\ell+1}\sigma_\ell(x)\,,\] so
  $\partial_{\ell+1}\sigma_\ell(x)$ is a left inverse to the inclusion
  $\bo[\ell]{F} \hookrightarrow F_{\ell}$, which means that
  $\bo[\ell]{F}$ is a summand of a free module, whence
  $\pdim{\co[0]{F}}$ is at most $\ell+1$.
\end{proof}

\begin{remark}
  Notice that the complexes $X^n$ in the proof above have amplitude
  $\ell$. Thus, for $\ell \ge 2$ one gets the desired conclusion that
  $R$ is Gorenstein, as long as perfect $R$-complexes of amplitude
  $\ell$ have $\cat{G}$-level at most $\ell$.
\end{remark}

\begin{chunk}\label{auslanderbridger}
  Recall that if $R$ is local and $M \in \catdbf$ has finite
  Gorenstein dimension, then there is a classic formula due to
  Auslander and Bridger, see \cite[Theorem
  19.4.25]{Christensen/Foxby/Holm:2024}:
  \[\gdim[R]{M}=\depth R -\depth_R M\,.\]
\end{chunk}

The following fact will be used several times in the next couple of
proofs.

\begin{chunk}
  \label{letz}
  Let $\mfp$ be a prime ideal in $R$ and $P$ a perfect
  $R_\mfp$-complex. Since every finitely generated free
  $R_\mfp$-module is a localization of a finitely generated free
  $R$-module, Letz's proof of \cite[Lemma 3.9]{Letz:2021} shows that
  there exists a bounded complex $F$ of finitely generated free
  $R$-modules with $F_{\mfp}\simeq P$ in $\catdbf[R_\mfp]$.
\end{chunk}

\begin{lemma}
  \label{koszul-p}
  Let $R$ be Cohen--Macaulay of finite Krull dimension. There is a
  bounded complex $F$ of finitely generated free $R$-modules with
  $\level{\cat{G}}{F} \ge \dim R + 1$. Moreover, if $R$ is local then
  one can take $F$ to be the Koszul complex on a sequence of
  parameters for $R$, and in that case equality holds.
\end{lemma}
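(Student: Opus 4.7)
The plan is to treat the local and non-local cases separately. In the local case, I would take $F = K$ to be the Koszul complex on a system of parameters $x_1, \ldots, x_d$ for $R$; since $R$ is Cohen--Macaulay, this is a regular sequence, so $K$ is a bounded complex of finitely generated free $R$-modules quasi-isomorphic to $M \coloneqq R/(x_1, \ldots, x_d)$. The upper bound $\level{\cat{G}}{K} \leq d+1$ follows by building $K$ from its $d+1$ free terms $K_0, \ldots, K_d$, which all lie in $\cat{G}$, via the $d$ successive mapping cones arising from the stupid-truncation filtration.

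For the lower bound $\level{\cat{G}}{K} \geq d+1$, I would apply Ghost Lemma~\ref{ghostlemma} to the $d$ connecting morphisms $\delta_i \colon R/(x_1, \ldots, x_i) \to \Sigma R/(x_1, \ldots, x_{i-1})$ arising from the short exact sequences $0 \to R/(x_1, \ldots, x_{i-1}) \xrightarrow{x_i} R/(x_1, \ldots, x_{i-1}) \to R/(x_1, \ldots, x_i) \to 0$. Their $d$-fold composite $M \to \Sigma^d R$ is the Yoneda product of these extensions; under the standard identification $\ext{d}{M}{R} \cong R/(x_1, \ldots, x_d)$ coming from the Koszul resolution it corresponds to $1$, so it is nonzero. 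The main work is to show each $\delta_i$ is $\cat{G}$-ghost, for which it suffices to verify $\ext{m}{G}{R/(x_1, \ldots, x_{i-1})} = 0$ for all $m \geq 1$ and all $G \in \cat{G}$. The base case $i = 1$ is the defining property of Gorenstein projective modules, and the inductive step follows from the long exact sequence associated to the short exact sequence above: for $m \geq 1$ the term $\ext{m}{G}{R/(x_1, \ldots, x_{i-1})}$ is sandwiched between the two vanishing groups $\ext{m}{G}{R/(x_1, \ldots, x_{i-2})}$ and $\ext{m+1}{G}{R/(x_1, \ldots, x_{i-2})}$.

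For the non-local case, I would choose a maximal ideal $\mfm$ of $R$ with $\dim R_\mfm = d$, which exists because $\dim R$ is finite. By the local case, the Koszul complex $P$ on a system of parameters for $R_\mfm$ satisfies $\level[R_\mfm]{\cat[R_\mfm]{G}}{P} = d+1$. By the construction recalled in~\ref{letz}, there is a bounded complex $F$ of finitely generated free $R$-modules with $F_\mfm \simeq P$ in $\catdbf[R_\mfm]$. Since total reflexivity is preserved under localization, the triangulated localization functor $(-)_\mfm \colon \catdbf \to \catdbf[R_\mfm]$ maps $\cat{G}$ into $\cat[R_\mfm]{G}$, and hence levels are non-increasing under it, giving $\level{\cat{G}}{F} \geq \level[R_\mfm]{\cat[R_\mfm]{G}}{F_\mfm} = d+1$.

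I expect the main obstacle to be verifying the $\cat{G}$-ghost property of each $\delta_i$; this rests on the Ext-vanishing described above, a clean but careful induction using only the regular-sequence property of the system of parameters and the defining property of Gorenstein projective modules.
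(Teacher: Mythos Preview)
Your argument is correct, and the non-local reduction via \ref{letz} and the behaviour of levels under localization is exactly what the paper does. The difference is entirely in the local case. The paper dispatches both inequalities in one stroke by invoking \cite[Corollary~3.4]{Awadalla/Marley:2022}, which says that for a finitely generated module $N$ of finite Gorenstein dimension one has $\level{\cat{G}}{N}=\gdim{N}+1$; since the Koszul complex $K$ on a system of parameters is a free resolution of $\h[0]{K}$, this gives $\level{\cat{G}}{K}=\gdim{\h[0]{K}}+1$, and the Auslander--Bridger formula \ref{auslanderbridger} then yields $\dim R+1$. Your approach instead reproves this equality from scratch in this special case: the stupid-truncation filtration for the upper bound, and an explicit chain of $\cat{G}$-ghost connecting maps $\delta_i$ (with the Ext-vanishing established inductively along the regular sequence) for the lower bound via the Ghost Lemma. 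What you gain is a self-contained argument that does not rely on the Awadalla--Marley level formula; what you lose is brevity, since that formula packages precisely the ghost argument you are unwinding.
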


\begin{proof}
  Let $\mfm$ be a maximal ideal of $R$ with $\dim R_\mfm = \dim
  R$. Let $K$ be the Koszul complex on a sequence of parameters for
  $R_\mfm$ and choose by \ref{letz} a bounded complex $F$ of finitely
  generated free $R$-modules with $F_\mfm \simeq K$. In view of
  \cite[Lemma 2.4(6)]{Avramov/Buchweitz/Iyengar/Miller:2010} this
  explains the inequality in the display below. Per \cite[Corollary
  3.4]{Awadalla/Marley:2022} and \ref{auslanderbridger} the equalities
  hold as $K$ is the minimal free resolution of $\h[0]{K}$,
  \begin{align*}
    \level{\cat{G}}{F} & \ge \level[R_\mfm]{\cat[R_\mfm]{G}}{K}\\
                       & = \level[R_\mfm]{\cat[R_\mfm]{G}}{\h[0]{K}} \\
                       & = \gdim[R_\mfm]{\h[0]{K}} +1 \\ &= \dim R + 1\,.  \qedhere
  \end{align*}
\end{proof}

For $d = 0$ the conclusion in the next result is not optimal; see
Proposition \ref{lemmaforcmgperfuacimpliesgorenstein}.

\begin{theorem}
  \label{cmgperfuacimpliesgorenstein}
  Let $R$ be a commutative noetherian ring. If there exists an integer
  $d$ such that $\level{\cat{G}}{P}\leq d+1$ holds for every perfect
  $R$-complex $P$, then $R$ is Gorenstein of Krull dimension at most
  $d$.
\end{theorem}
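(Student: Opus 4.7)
The plan is to reduce to the local case already handled by Theorem~\ref{improvedcmgperfuacimpliesgorenstein}, and then use Lemma~\ref{koszul-p} together with \ref{letz} to extract a dimension bound from the level bound.

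First I would show that the hypothesis descends to every localization. Fix a prime $\mfp$ and a perfect $R_\mfp$-complex $P$. By \ref{letz} there is a bounded complex $F$ of finitely generated free $R$-modules with $F_\mfp\simeq P$ in $\catdbf[R_\mfp]$. Since $R$ is commutative noetherian, the localization of a finitely generated Gorenstein projective $R$-module at $\mfp$ is a finitely generated Gorenstein projective $R_\mfp$-module, and localization is an exact functor preserving shifts, direct sums, and summands. Consequently $\level[R_\mfp]{\cat[R_\mfp]{G}}{F_\mfp}\leq \level{\cat{G}}{F}$ by \cite[Lemma 2.4(6)]{Avramov/Buchweitz/Iyengar/Miller:2010}, so the hypothesis gives
\[
  \level[R_\mfp]{\cat[R_\mfp]{G}}{P}\leq d+1
\]
for every perfect $R_\mfp$-complex $P$.

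Next I would apply Theorem~\ref{improvedcmgperfuacimpliesgorenstein} to each local ring $R_\mfp$ to conclude that $R_\mfp$ is Gorenstein for every prime $\mfp$. In particular $R$ is Gorenstein in the sense that all its localizations have finite injective dimension over themselves, and in particular $R$ is Cohen--Macaulay at every prime.

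Finally I would bound the Krull dimension. Let $\mfp$ be any prime of height $h$. Since $R_\mfp$ is local Gorenstein of Krull dimension $h$, Lemma~\ref{koszul-p} applied to $R_\mfp$ produces the Koszul complex $K$ on a system of parameters with $\level[R_\mfp]{\cat[R_\mfp]{G}}{K}=h+1$. By \ref{letz} there is a bounded complex $F$ of finitely generated free $R$-modules with $F_\mfp\simeq K$, and the same localization inequality used above gives
\[
  d+1\;\geq\;\level{\cat{G}}{F}\;\geq\;\level[R_\mfp]{\cat[R_\mfp]{G}}{F_\mfp}\;=\;h+1\,.
\]
Hence $h\leq d$ for every prime $\mfp$, so $\dim R\leq d$. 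The main obstacle in this argument is the descent of the hypothesis under localization, which hinges on the (standard, but worth stating) fact that localization of finitely generated Gorenstein projectives are finitely generated Gorenstein projectives over the localized ring; everything else is a clean assembly of the preceding results.
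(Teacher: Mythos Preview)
Your proof is correct and follows essentially the same route as the paper: reduce to localizations via \ref{letz} and \cite[Lemma 2.4(6)]{Avramov/Buchweitz/Iyengar/Miller:2010}, invoke Theorem~\ref{improvedcmgperfuacimpliesgorenstein} for Gorensteinness, and use the Koszul complex on parameters (Lemma~\ref{koszul-p}) for the dimension bound. The only cosmetic differences are that the paper works at maximal ideals rather than all primes, and it invokes Lemma~\ref{koszul-p} directly for the dimension bound whereas you unfold that argument inline.
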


\begin{proof}
  Let $\mfm$ be a maximal ideal of $R$; it suffices to show that the
  local ring $R_\mfm$ is Gorenstein of Krull dimension at most
  $d$. Let $P$ be a perfect $R_{\mfm}$-complex. Per \ref{letz} there
  exists a perfect $R$-complex $F$ with $F_{\mfm}\simeq P$ in
  $\catdbf[R_\mfm]$. Now, \cite[Lemma
  2.4(6)]{Avramov/Buchweitz/Iyengar/Miller:2010} yields the first
  inequality in the chain
  \[\level[R_{\mfm}]{\cat[R_{\mfm}]{G}}{P} =
    \level[R_{\mfm}]{\cat[R_{\mfm}]{G}}{F_{\mfm}} \leq
    \level{\cat{G}}{F} \leq d+1\,,\] which implies that $R_{\mfm}$ is
  Gorenstein by Theorem \ref{improvedcmgperfuacimpliesgorenstein}.

  Finally, one has $\dim R_{\mfm} \le d$ by Lemma \ref{koszul-p}.
\end{proof}

\begin{remark}
  Let $M$ be a complex in $\catdbf$ of finite Gorenstein projective
  dimension. It follows from \cite[Proposition
  9.1.27]{Christensen/Foxby/Holm:2024} that $M$ fits in a triangle
  with a perfect $R$-complex and a module from $\cat{G}$. This
  provides some measure of an \emph{a posteriori} explanation of why
  it suffices to bound the $\cat{G}$-level of perfect $R$-complexes to
  conclude that $R$ is Gorenstein.
\end{remark}

\section{G-levels over Gorenstein rings}

\begin{chunk}
  For an $R$-complex $M$, set
  $M^\oplus \coloneqq \bigoplus_{n \in \mathbb{Z}} M_n$. This notation
  comes in handy as $\level{\cat{G}}{M} = \level{\cat{G}}{M^\oplus}$
  holds for a complex $M$ with zero differential.
\end{chunk}

The next result follows from work of Aihara, Araya, Iyama, Takahashi,
and Yoshiwaki \cite[Theorem 4.1]{AAITY-14}; it can be also proved by
an argument dual to the proof of \Cref{GIInequality}.

\begin{proposition}
  \label{mainglevelthm}
  For every complex $M$ in $\catdbf$ one has
  \[\level{\cat{G}}{M}
    \le\max\{2,\gdim{\h{M}}^\oplus+1\}\,.\]
\end{proposition}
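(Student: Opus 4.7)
The plan is induction on $g = \gdim \h{M}^\oplus$. The case $g = \infty$ is trivial since the right-hand side equals $+\infty$; when $g < \infty$, Proposition \ref{xfinitegdim} ensures that $M$ has finite Gorenstein dimension, and I choose a bounded resolution $G \simeq M$ with each $G_i \in \cat{G}$. The equality in the statement is the module formula $\level{\cat{G}}{N} = \gdim N + 1$ of \cite[Corollary 3.4]{Awadalla/Marley:2022} applied to $N = \h{M}^\oplus$, so it suffices to prove the inequality.

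For the inductive step $g \ge 2$, I apply Lemma \ref{levelsyz} with $n = 1$ to obtain $\level{\cat{G}}{M} \le \level{\cat{G}}{\Omega^1_R(M)} + 1$. By Lemma \ref{depth}, the graded module $\h{\Omega^1_R(M)}^\oplus$ is the first syzygy of $\h{M}^\oplus$ in a free resolution, hence of Gorenstein dimension $g - 1$. The inductive hypothesis applied to $\Omega^1_R(M)$ yields $\level{\cat{G}}{\Omega^1_R(M)} \le \max\{2, g\} = g$, so $\level{\cat{G}}{M} \le g + 1 = \max\{2, g+1\}$ as desired.

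The heart of the proof is the base case $g \le 1$, where the sharper bound $\level{\cat{G}}{M} \le 2$ is required; one more iteration of the step above would yield only $\le 3$. For this I plan to use the exact sequence of $R$-complexes
\[0 \longrightarrow \cy{G} \longrightarrow G \longrightarrow \Sigma\, \bo{G} \longrightarrow 0,\]
assembled degree by degree from \ref{subquotientSES}(\ref{twoequation}), in which $\cy{G}$ and $\bo{G}$ carry zero differential. Its associated exact triangle gives $\level{\cat{G}}{M} \le \level{\cat{G}}{\cy{G}} + \level{\cat{G}}{\Sigma\,\bo{G}}$, so via the chunk on $M^\oplus$ it suffices to show that every $\cy[i]{G}$ and every $\bo[i]{G}$ lies in $\cat{G}$.

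The hardest step is verifying this Gorenstein projectivity of cycles and boundaries. Lemma \ref{finitegdimofZandB} guarantees finiteness of all Gorenstein dimensions appearing in $G$, so the standard inequality $\gdim A \le \max\{\gdim B, \gdim C - 1\}$ for a short exact sequence $0 \to A \to B \to C \to 0$ applies at every stage. Normalizing so that $\inf G = 0$, one has $\cy[0]{G} = G_0 \in \cat{G}$; then an upward induction on $i$ propagates Gorenstein projectivity as follows: sequence \ref{subquotientSES}(\ref{oneequation}) combined with $\gdim \h[i]{G} \le 1$ gives $\bo[i]{G} \in \cat{G}$ from $\cy[i]{G} \in \cat{G}$, and sequence \ref{subquotientSES}(\ref{twoequation}) gives $\cy[i+1]{G} \in \cat{G}$ from $G_{i+1}, \bo[i]{G} \in \cat{G}$. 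The essential use of the hypothesis $\gdim \h{M}^\oplus \le 1$ occurs in the first of these steps, where the $-1$ shift in the Gorenstein-dimension inequality just barely drops one back to Gorenstein projectivity; without this hypothesis the induction breaks, which is precisely why the bound $\max\{2, g+1\}$ cannot be improved to $g+1$ at $g = 0$.
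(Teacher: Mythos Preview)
Your argument is correct and follows the same architecture as the paper: reduce via the Adams resolution $\Omega^n_R(M)$ and Lemma~\ref{levelsyz} to the case $\gdim\h{M}^\oplus\le 1$, replace $M$ by a bounded complex $G$ of Gorenstein projectives, and then bound the level by $2$ using the triangle coming from $0\to\cy{G}\to G\to\Sigma\bo{G}\to 0$ once one knows each $\cy[i]{G}$ and $\bo[i]{G}$ lies in $\cat{G}$. The one substantive difference is how that last fact is established. The paper works locally: it invokes Lemma~\ref{mcm} (a depth induction), the Auslander--Bridger formula~\ref{auslanderbridger}, and the local--global principle \cite[Proposition 19.4.12]{Christensen/Foxby/Holm:2024} to pass from depth bounds to Gorenstein projectivity. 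Your route is more direct and avoids all of this: you run the induction on $i$ using the short-exact-sequence inequality $\gdim A\le\max\{\gdim B,\gdim C-1\}$ together with Lemma~\ref{finitegdimofZandB} to guarantee finiteness throughout. This is a genuine simplification, and it makes Lemma~\ref{mcm} unnecessary for this proposition.

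One small correction: in the inductive step you cite \ref{subquotientSES}(\ref{oneequation}) for the passage from $\cy[i]{G}\in\cat{G}$ to $\bo[i]{G}\in\cat{G}$, but sequence~(\ref{oneequation}) involves $\co[i]$ and $\bo[i-1]$. The sequence you actually use is the defining one, $0\to\bo[i]{G}\to\cy[i]{G}\to\h[i]{G}\to 0$, which is not among the three listed in \ref{subquotientSES}; just state it directly.
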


\begin{remark}
  \label{Gattain}
  The upper bound on $\cat{G}$-levels given above can be attained:
  Indeed, $\level{\cat{G}}{M} = \gdim{M} + 1$ holds for every
  $R$-module $M$ by \cite[Corollary~3.4]{{Awadalla/Marley:2022}},
  whence the bound is attained by every module of positive
  G-dimension. Thus, one may rewrite the upper bound as
  \[\level{\cat{G}}{M}
    \le \max\{2,\level{\cat{G}}{\h{M}^\oplus}\}\,.\] Moreover, there
  exists by \cite[Example 3.10]{Awadalla/Marley:2022} an $R$-complex
  $M$ with $\h{M}^\oplus$ in $\cat{G}$ but $\level{\cat{G}}{M} = 2$.
\end{remark}

The following is an immediate consequence of Proposition
\ref{mainglevelthm}:

\begin{theorem}
  \label{glevelbound}
  Let $R$ be Gorenstein and $M$ a complex in $\catdbf$. One has
  \[\level{\cat{G}}{M} \le \max\{2, \dim{R}+1\}\,.\]
\end{theorem}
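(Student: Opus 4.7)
The plan is to deduce Theorem \ref{glevelbound} immediately from Proposition \ref{mainglevelthm}, which gives
\[\level{\cat{G}}{M} \le \max\{2,\gdim{\h{M}^\oplus} + 1\}\,.\]
Since the claimed bound is vacuous when $\dim R$ is infinite, it suffices to treat the case $\dim R < \infty$ and to show that $\gdim{\h{M}^\oplus} \le \dim R$ in that case.

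Because $M \in \catdbf$ is bounded with degreewise finitely generated homology, $\h{M}^\oplus$ is a finitely generated $R$-module, and I would bound its Gorenstein dimension by localizing at primes. For each prime $\mfp$ of $R$, the ring $R_\mfp$ is Gorenstein and local, so every finitely generated $R_\mfp$-module has finite Gorenstein dimension. Applying the Auslander--Bridger formula recalled in \ref{auslanderbridger} to $\h{M}^\oplus_\mfp$, one obtains
\[\gdim[R_\mfp]{\h{M}^\oplus_\mfp} = \depth R_\mfp - \depth_{R_\mfp}\h{M}^\oplus_\mfp \le \depth R_\mfp = \dim R_\mfp \le \dim R\,,\]
where the penultimate equality uses that $R_\mfp$ is Gorenstein (so its depth equals its Krull dimension). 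Since Gorenstein dimension of a finitely generated module is the supremum of the Gorenstein dimensions of its localizations, by \cite[Proposition 19.4.12]{Christensen/Foxby/Holm:2024}, this yields $\gdim{\h{M}^\oplus} \le \dim R$, and substitution into Proposition \ref{mainglevelthm} completes the argument.

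I do not anticipate any real obstacle here: Proposition \ref{mainglevelthm} has done the substantive work of reducing the $\cat{G}$-level of an arbitrary complex in $\catdbf$ to the Gorenstein dimension of a single module, and the remaining step is the standard fact that over a Gorenstein ring of finite Krull dimension each finitely generated module has Gorenstein dimension at most $\dim R$. The $\max\{2,\cdot\}$ in the statement is simply inherited from Proposition \ref{mainglevelthm} and handles the degenerate case $\dim R = 0$, where the Auslander--Bridger computation would only give $\gdim{\h{M}^\oplus} \le 0$, that is, an upper bound of $1$ on the G-level, which is superseded by the level $2$ arising from splitting a complex into its cycles and boundaries.
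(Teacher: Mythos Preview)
Your proposal is correct and matches the paper's approach: the paper states Theorem \ref{glevelbound} as ``an immediate consequence of Proposition \ref{mainglevelthm}'' without further argument, and you have simply spelled out the implicit step, namely that over a Gorenstein ring of finite Krull dimension every finitely generated module has Gorenstein dimension at most $\dim R$ via Auslander--Bridger and localization. The only minor quibble is that \cite[Proposition 19.4.12]{Christensen/Foxby/Holm:2024} is invoked in the paper for the case of Gorenstein dimension zero rather than for the full supremum formula, so you may want to double-check that citation, but the mathematics is standard and sound.
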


Since $R$ is regular of finite Krull dimension if and only if there
exists a uniform upper bound on the $R$-level of all perfect
$R$-complexes, Theorem \ref{glevelbound} tells us that over a
nonregular Gorenstein ring there must exist a perfect complex $P$ such
that the inequality, $\level{\cat{G}}{P} \leq \level{R}{P}$, which
holds as $R$ belongs to $\cat{G}$, is strict.

\begin{proposition}
  \label{perfectcomlexesovernonregulargorensteinrings}
  Let $R$ be Gorenstein and not regular and $n\geq 2$ an
  integer. There exists a perfect $R$-complex $P$ with
  $\level{R}{P}=n+1$ and $\level{\cat{G}}{P}\leq 2$.
\end{proposition}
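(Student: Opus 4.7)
The plan is to take $P$ to be a brutal truncation of the minimal free resolution of a non-free Gorenstein projective module. As in the proof of Theorem~\ref{cmgperfuacimpliesgorenstein}, the construction can be reduced to the local case: localize at a maximal ideal $\mfm$ of $R$ with $R_\mfm$ Gorenstein non-regular (such an $\mfm$ exists since $R$ is not regular), carry out the construction there, and lift back via \ref{letz}. I will therefore assume $R$ is local, Gorenstein, and non-regular. Over such an $R$ the finitely generated Gorenstein projective modules coincide with the maximal Cohen--Macaulay modules, and not all of them are free; choose a non-free example $G$, let $\cdots\to F_1\to F_0\to G\to 0$ denote its minimal free resolution, and set $P\coloneqq F_n\to F_{n-1}\to\cdots\to F_0$, the brutal truncation at homological degree~$n$.

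The homology of $P$ is easy to compute: exactness of the resolution in positive degrees yields $H_i(P)=0$ for $0<i<n$, $H_0(P)=G$, and $H_n(P)=\ker(\partial_n)=\Omega^{n+1}G$. Since syzygies of Gorenstein projective modules are Gorenstein projective, $\h{P}^\oplus\cong G\oplus\Omega^{n+1}G$ is Gorenstein projective and $\gdim\h{P}^\oplus=0$; Proposition~\ref{mainglevelthm} then yields $\level{\cat{G}}{P}\leq 2$. The upper bound $\level{R}{P}\leq n+1$ is immediate, as $P$ is a bounded complex with $n+1$ free terms.

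The main obstacle is the matching lower bound $\level{R}{P}\geq n+1$. I would apply Ghost Lemma~\ref{ghostlemma} with $\class{C}=\{R\}$ to the Adams resolution of $P$ from Construction~\ref{construction}. Each connecting morphism $\alpha_k\colon\Sigma^{k-1}\Omega_R^{k-1}(P)\to\Sigma^k\Omega_R^k(P)$ is $\{R\}$-ghost because the chosen graded free cover $F^{k-1}$ surjects onto the homology, so the induced map on $H_\ast$ vanishes; composing these $n$ morphisms produces $\beta\colon P\to\Sigma^n\Omega_R^n(P)$, and the critical step is verifying that $\beta\neq 0$. Let $p\colon P\to G=H_0(P)$ be the canonical projection; by functoriality of the Adams construction (up to the usual choices of lifts), $p$ induces a morphism $\gamma\colon\Sigma^n\Omega_R^n(P)\to\Sigma^n\Omega^n G$, and the composition $\gamma\circ\beta\colon P\to\Sigma^n\Omega^n G$ agrees with the pullback $p^\ast(\xi)$ of the Yoneda class $\xi\in\ext{n}{G}{\Omega^n G}$ of the minimal resolution of $G$. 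Writing $\ext{n}{P}{\Omega^n G}=\Hom{F_n}{\Omega^n G}/\im(\partial_n^\ast)$, the class $p^\ast(\xi)$ is represented by the canonical surjection $F_n\twoheadrightarrow\Omega^n G=\im(\partial_n)$; this is nonzero, because a retraction of $\Omega^n G\hookrightarrow F_{n-1}$ would contradict the minimality of the resolution together with the non-freeness of $\Omega^n G$. Therefore $\gamma\circ\beta\neq 0$, whence $\beta\neq 0$, and Ghost Lemma~\ref{ghostlemma} delivers $\level{R}{P}\geq n+1$.
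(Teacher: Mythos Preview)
Your reduction to the local case via \ref{letz} does not work. If you construct $P'$ over $R_\mfm$ and lift to a perfect $R$-complex $F$ with $F_\mfm \simeq P'$, the localization inequality \cite[Lemma~2.4(6)]{Avramov/Buchweitz/Iyengar/Miller:2010} reads $\level[R_\mfm]{\cat[R_\mfm]{G}}{P'} \le \level{\cat{G}}{F}$, which is the wrong direction: it gives no upper bound on $\level{\cat{G}}{F}$, so you cannot conclude $\level{\cat{G}}{F}\le 2$. (The analogy with Theorem~\ref{cmgperfuacimpliesgorenstein} is misleading: there one passes a hypothesis \emph{from} $R$ \emph{to} $R_\mfm$, which is the direction localization supports.) There is also a smaller issue with $\level{R}{F}\le n+1$, since \ref{letz} as stated does not control the amplitude of the lift.

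The fix is simply to drop the reduction and carry out the construction over $R$, as the paper does. A non-projective $G\in\cat{G}$ exists over $R$ itself---e.g.\ a sufficiently high syzygy of $R/\mfm$ for $\mfm$ maximal with $R_\mfm$ non-regular---and truncating any free resolution of $G$ over $R$ yields $P$ with $\h[0]{P}\cong G$ and $\h[n]{P}\cong\Omega^{n+1}G$ both in $\cat{G}$; the triangle $\Sigma^n\h[n]{P}\to P\to \h[0]{P}\to$ then gives $\level{\cat{G}}{P}\le 2$ directly. Minimality of the resolution is not needed for the lower bound on the $R$-level either: your class $\xi\in\ext{n}{G}{\Omega^nG}$ is nonzero provided $\Omega^nG$ is not projective, and that follows from $\pdim{G}=\infty$. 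The paper simply cites the proof of \cite[Corollary~2.3]{Altmann/Grifo/Montano/Sanders/Vu:2017} for $\level{R}{P}=n+1$; your ghost argument is a reasonable substitute, though it streamlines considerably if you use the $n$ ghosts $P\to\Sigma\Omega G\to\cdots\to\Sigma^n\Omega^nG$ (the first being $P\to G\to\Sigma\Omega G$, which is $R$-ghost since $n\ge 2$) in place of the Adams resolution of $P$; the composite is $p^*(\xi)$, and $p^*$ is injective because $\ext{-1}{\Omega^{n+1}G}{\Omega^nG}=0$.
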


\begin{proof}
  We build off an argument by Altmann, Grifo, Monta\~{n}o, Sanders,
  and Vu \cite[Corollary
  2.3]{Altmann/Grifo/Montano/Sanders/Vu:2017}. Let $G$ be a module in
  $\cat{G}$ and not projective, i.e.\ of infinite projective
  dimension. If $F \to G$ is a free resolution, then the truncated
  complex
  \[P \coloneqq 0\longrightarrow
    F_{n}\longrightarrow\cdots\longrightarrow F_{0}\longrightarrow 0\]
  has $R$-level $n+1$ by the proof of \cite[Corollary
  2.3]{Altmann/Grifo/Montano/Sanders/Vu:2017}. Further, there is a
  triangle
  \[\Sigma^n\h[n]{P}\longrightarrow P\longrightarrow
    \h[0]{P}\longrightarrow\] in $\catdbf$, so
  $\level{\cat{G}}{P}\leq 2$ holds by \cite[Lemma
  2.4(2)]{Avramov/Buchweitz/Iyengar/Miller:2010} as both modules
  $\h[n]{P}$ and $\h[0]{P} \cong G$ are Gorenstein projective modules.
\end{proof}

The inequality in Theorem \ref{mainglevelthm} begs the question: What
happens if the $\cat{G}$-levels of complexes in $\catdbf$ are at most
$1$?

\begin{proposition}
  \label{lemmaforcmgperfuacimpliesgorenstein}
  If\, $\level{\cat{G}}{P}\leq 1$ holds for every perfect $R$-complex
  $P$, then $R$ is regular of Krull dimension $0$.
\end{proposition}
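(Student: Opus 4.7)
The plan is to combine Theorem \ref{cmgperfuacimpliesgorenstein} (in the case $d=0$) with a small Koszul-type perfect complex that distinguishes fields from non-field artinian Gorenstein rings. First, Theorem \ref{cmgperfuacimpliesgorenstein} applied with $d=0$ forces $R$ to be Gorenstein of Krull dimension $0$. Since regularity of Krull dimension $0$ is a local property (each $R_\mfm$ must be a field), and since the level-$1$ hypothesis descends to every localization $R_\mfm$ by \ref{letz} together with \cite[Lemma 2.4(6)]{Avramov/Buchweitz/Iyengar/Miller:2010} — exactly as in the proof of \ref{cmgperfuacimpliesgorenstein} — it suffices to treat the case in which $R$ is artinian local Gorenstein, and to show that its maximal ideal $\mfm$ vanishes.

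Suppose for contradiction that $\mfm \ne 0$, and pick $x \in \mfm$ with $x \ne 0$. Consider the perfect $R$-complex $P \coloneqq (R \xrightarrow{x} R)$ in homological degrees $1$ and $0$, whose homology modules are $\h[1]{P} = (0 :_R x)$ and $\h[0]{P} = R/xR$. By hypothesis $\level{\cat{G}}{P} \le 1$, so $P$ is a direct summand in $\catdbf$ of some finite sum $X = \bigoplus_i \Sigma^{n_i} G_i$ with $G_i \in \cat{G}$; note that $X$ has zero differential. I plan to show any such summand of a zero-differential complex is formal: writing $X \simeq P \oplus Q$ in $\catdbf$ induces a module decomposition $\h[j]{X} \cong \h[j]{P} \oplus \h[j]{Q}$ for each $j$, and composing the inclusion $\Sigma^j \h[j]{P} \hookrightarrow \Sigma^j \h[j]{X}$ with the embedding $\Sigma^j \h[j]{X} \hookrightarrow X$ and the retraction $X \to P$ yields a morphism $\Sigma^j \h[j]{P} \to P$ in $\catdbf$. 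Summing over $j$ produces $\bigoplus_j \Sigma^j \h[j]{P} \to P$, which is the identity on every homology module and therefore a quasi-isomorphism. Hence $P \simeq R/xR \oplus \Sigma(0 :_R x)$ in $\catdbf$.

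Direct summands of perfect complexes are perfect, so the module $R/xR$ has finite projective dimension over $R$. The Auslander--Buchsbaum formula over the artinian local ring $R$ then forces $R/xR$ to be free, say $R/xR \cong R^n$ for some $n \ge 0$. But $\length_R(R/xR) = n \cdot \length_R(R)$, whereas $x \in \mfm$ and $x \ne 0$ force $0 < \length_R(R/xR) < \length_R(R)$, incompatible with any nonnegative integer $n$. This contradiction forces $\mfm = 0$, so $R$ is a field. The principal subtlety is the passage from level $1$ to the formality of $P$ in $\catdbf$; after this reduction the length-based contradiction is immediate.
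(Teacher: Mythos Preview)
Your proof is correct and follows the same broad outline as the paper's---reduce to the artinian local case, then exploit that a level-$1$ object is formal---but your endgame is different and more self-contained. The paper takes the full Koszul complex $K$ on a minimal generating set for $\mfm$, observes (tersely) that level~$1$ forces $K \simeq \h{K}$, and then invokes \cite[Proposition 4.7]{Altmann/Grifo/Montano/Sanders/Vu:2017} to conclude that $\h[i]{K}=0$ for $i\neq 0$, whence $\pdim{k}<\infty$ and $R$ is regular. You instead use the two-term complex $R\xrightarrow{x}R$ for a single nonzero $x\in\mfm$, make the formality argument explicit, and finish with an Auslander--Buchsbaum plus length count to rule out $R/xR$ being free. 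Your route avoids the external citation and spells out the ``summand of a zero-differential complex is formal'' step that the paper leaves implicit; the paper's route is shorter once that citation is granted.
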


\begin{proof}
  It follows from \Cref{cmgperfuacimpliesgorenstein} that $R$ has
  Krull dimension $0$.  Let $\mfm$ be a maximal ideal in $R$; it
  suffices to show that $R_\mfm$ is regular. Let $P$ be a perfect
  complex over $R_{\mfm}$; per \ref{letz} there exists a perfect
  $R$-complex $F$ with $F_{\mfm}\simeq P$. Now, \cite[Lemma
  2.4(6)]{Avramov/Buchweitz/Iyengar/Miller:2010} gives the first
  inequality below
  \[\level[R_{\mfm}]{\cat[R_{\mfm}]{G}}{P} =
    \level[R_{\mfm}]{\cat[R_{\mfm}]{G}}{F_{\mfm}} \leq
    \level{\cat{G}}{F} \leq 1.\] One can now assume that $R$ is local.
  Let $K$ be the Koszul complex on a minimal generating set for the
  maximal ideal of $R$ and let $k$ be the residue field of $R$. Since
  $\level{\cat{G}}{K}=1$ holds, $K$ is isomorphic in the derived
  category to a complex $M$ that is a direct sum of shifts of modules
  from $\cat{G}$, and so we have
  \[K\simeq M\simeq \h[]{M}\simeq \h[]{K} =
    \bigoplus\Sigma^{i}\h[i]{K}\] Since $\h[0]{K}=k\neq 0$, we have
  $\h[i]{K}=0$ for all $i\neq 0$ by \cite[Proposition
  4.7]{Altmann/Grifo/Montano/Sanders/Vu:2017}, and so
  $\pdim{k}<\infty$. This implies $R$ is regular.
\end{proof}

\begin{theorem}
  \label{boundForOneDimensionalRings}
  Let $R$ be commutative noetherian; the following assertions hold.
  \begin{enumerate}\setlength{\itemsep}{1ex}
  \item[\textnormal{(a)}] The next conditions are equivalent.
    \begin{enumerate}\setlength{\itemsep}{.65ex}
    \item[$(i)$] $\sup\{\level{\cat{G}}{M} \mid M\in\catdbf \} = 2$
      holds.
    \item[$(ii)$]
      $\sup\{\level{\cat{G}}{P} \mid P \textnormal{ is a perfect
        $R$-complex} \} = 2$ holds.
    \item[$(iii)$] $R$ is Gorenstein with $\dim R =1$ or Gorenstein
      with $\dim R = 0$ and not regular.
    \end{enumerate}
    
  \item[\textnormal{(b)}] Let $d \ge 2$ be an integer. The following
    are equivalent.
    \begin{enumerate}\setlength{\itemsep}{.65ex}
    \item[$(i)$] $\sup\{\level{\cat{G}}{M} \mid M\in\catdbf \} = d+1$
      holds.
    \item[$(ii)$]
      $\sup\{\level{\cat{G}}{P} \mid P \textnormal{ is a perfect
        $R$-complex} \} = d+1$ holds.
    \item[$(iii)$] $R$ is Gorenstein with $\dim R = d$.
    \end{enumerate}
  \end{enumerate}
\end{theorem}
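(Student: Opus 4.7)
The plan is to deduce both parts simultaneously by routing all four equivalences through condition (iii), assembling Theorems \ref{cmgperfuacimpliesgorenstein} and \ref{glevelbound}, Lemma \ref{koszul-p}, and Proposition \ref{lemmaforcmgperfuacimpliesgorenstein}.

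For the implications $(\textnormal{iii}) \Longrightarrow (\textnormal{i})$ and $(\textnormal{iii}) \Longrightarrow (\textnormal{ii})$, I would suppose $R$ is Gorenstein of Krull dimension $d$ as prescribed by each case. Theorem \ref{glevelbound} then yields the upper bound $\level{\cat{G}}{M} \le \max\{2, d+1\}$ for every $M \in \catdbf$, which equals the target value $d+1$ when $d \ge 1$ and $2$ when $d = 0$. For the matching lower bound, Lemma \ref{koszul-p} produces a perfect $R$-complex of $\cat{G}$-level exactly $d+1$ whenever $d \ge 1$, covering part (b) and the $\dim R = 1$ case of part (a); and when $d = 0$ with $R$ not regular, the contrapositive of Proposition \ref{lemmaforcmgperfuacimpliesgorenstein} supplies a perfect $R$-complex of $\cat{G}$-level at least $2$. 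Thus both suprema in (i) and (ii) attain the asserted value.

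For the converse direction, I would first assume (ii) and show (iii). Since every perfect complex has $\cat{G}$-level at most $d+1$, Theorem \ref{cmgperfuacimpliesgorenstein} forces $R$ to be Gorenstein with $\dim R \le d$. In part (b), if $\dim R < d$, then since $d \ge 2$, Theorem \ref{glevelbound} bounds the supremum in (ii) by $\max\{2, \dim R + 1\} \le d < d+1$, a contradiction; hence $\dim R = d$. In part (a), $\dim R \in \{0,1\}$, and if $\dim R = 0$ then $R$ cannot be regular, for otherwise $R$ would be a finite product of fields and every perfect $R$-complex would be quasi-isomorphic to a direct sum of shifts of $R$, forcing the supremum in (ii) to be at most $1$. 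The implication $(\textnormal{i}) \Longrightarrow (\textnormal{iii})$ follows by the same argument, because (i) already supplies a uniform bound of $d+1$ on the $\cat{G}$-levels of all perfect complexes.

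The only real subtlety lies in the zero-dimensional branch of part (a), where one must distinguish regular (supremum $1$) from Gorenstein-nonregular (supremum $2$); this is precisely the content of Proposition \ref{lemmaforcmgperfuacimpliesgorenstein}, and with it in hand the remainder of the argument is straightforward bookkeeping with the earlier theorems.
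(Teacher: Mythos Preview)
Your proposal is correct and uses the same toolkit as the paper (Theorems \ref{cmgperfuacimpliesgorenstein} and \ref{glevelbound}, Lemma \ref{koszul-p}, Proposition \ref{lemmaforcmgperfuacimpliesgorenstein}). The one organizational difference is that the paper handles part (b) by induction on $d$, with the base case $d=2$ reducing to part (a), whereas you dispense with induction by observing directly that if $R$ is Gorenstein with $\dim R < d$ and $d \ge 2$, then Theorem \ref{glevelbound} bounds the relevant supremum by $\max\{2,\dim R + 1\} \le d$, contradicting the assumed value $d+1$. Your route is slightly more economical. A second minor difference: for the zero-dimensional regular case in part (a), the paper appeals to \cite[Theorem 5.5]{Avramov/Buchweitz/Iyengar/Miller:2010} on $R$-levels, while you argue directly that a regular artinian ring is a finite product of fields; either suffices. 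One small imprecision: Lemma \ref{koszul-p} only asserts $\cat{G}$-level $\ge d+1$, not equality, so the word ``exactly'' in your lower-bound sentence is justified only after combining with the upper bound from Theorem \ref{glevelbound}.
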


\begin{proof}
  (a): Assume that the equality in $(i)$ holds. It follows from
  Theorem \ref{cmgperfuacimpliesgorenstein} that $R$ is Gorenstein of
  Krull dimension at most $1$. If $\dim R$ equals $1$, then Lemma
  \ref{koszul-p} implies the existence of a perfect $R$-complex of
  $\cat{G}$-level $2$. If $\dim{R}=0$ holds, then $R$ is per
  \cite[Theorem 5.5]{Avramov/Buchweitz/Iyengar/Miller:2010} not
  regular, since there exists a complex in $\catdbf$ of
  $\cat{G}$-level $2$ and hence $R$-level at least $2$. Now,
  Proposition \ref{lemmaforcmgperfuacimpliesgorenstein} implies the
  existence of a perfect $R$-complex of $\cat{G}$-level $2$. Thus,
  $(i)$ implies both $(ii)$ and $(iii)$ and, more importantly, the
  arguments above apply verbatim to show that $(ii)$ implies
  $(iii)$. It remains to show that $(iii)$ implies~$(i)$:

  If $R$ is Gorenstein with $\dim R \le 1$, then $2$ is by
  \Cref{glevelbound} an upper bound on the $\cat{G}$-level of
  complexes in $\catdbf$. If $\dim R = 1$ holds, then Lemma
  \ref{koszul-p} implies the existence of a complex in $\catdbf$ of
  $\cat{G}$-level $2$. If $\dim R = 0$ holds and $R$ is not regular,
  then Proposition \ref{lemmaforcmgperfuacimpliesgorenstein} implies
  the existence of a complex in $\catdbf$ of $\cat{G}$-level $2$.

  (b): We proceed by induction on $d$. Let $d = 2$. If the equality in
  $(i)$ or $(ii)$ holds, then it follows from Theorem
  \ref{cmgperfuacimpliesgorenstein} and part (a) that $R$ is
  Gorenstein of Krull dimension $2$. On the other hand, if $R$ is such
  a ring, then there exists by Lemma \ref{koszul-p} a perfect
  $R$-complex of $\cat{G}$-level $3$, and $3$ is by \Cref{glevelbound}
  an upper bound on the $\cat{G}$-level of complexes in $\catdbf$.

  Now, let $d > 2$. If the equality in $(i)$ or $(ii)$ holds, then it
  follows from Theorem \ref{cmgperfuacimpliesgorenstein} that $R$ is
  Gorenstein of Krull dimension at most $d$, and by the induction
  hypothesis the Krull dimension is at least $d$. On the other hand,
  if $R$ is such a ring, then there exists by Lemma \ref{koszul-p} a
  perfect $R$-complex of $\cat{G}$-level $d+1$, and $d+1$ is by
  \Cref{glevelbound} an upper bound on the $\cat{G}$-level of
  complexes in $\catdbf$.
\end{proof}

\begin{example} \label{ex_lower_bound} Let $R$ be local with maximal
  ideal $\mfm$, and let $M$ be a complex in $\catdbf$ or a derived
  $\mathfrak{m}$-complete $R$-complex. Assume that $M$ has finite
  Gorenstein dimension and set $s\coloneqq \sup \h[*]{M}$.  If
  $\Gamma_I \h[s]{M}\neq 0$ holds, then we have
  \begin{align*}
    \gdim M &=\depth R -\depth M     \\
            &\geq \depth R -\depth(I,M) -\dim R/I \\
            &=  \depth R +s-\dim R/I \:.
  \end{align*}
  The first equality is by \Cref{auslanderbridger}. The inequality
  holds by work of Iyengar \cite[Proposition 5.5(4)]{Iyengar:1999} if
  $M$ is in $\catdbf$ and by work of Christensen and Ferraro
  \cite[Corollary 1.3]{Christensen/Ferraro:2024} if $M$ is derived
  $\mathfrak{m}$-complete. The last equality is standard when
  $\Gamma_I \h[s]{M}\neq 0$, see \cite[Theorem
  14.3.16]{Christensen/Foxby/Holm:2024}.  Now, per \cite[Theorem
  3.3]{Awadalla/Marley:2022} one has:
  \[
    \level{\cat G}{M} \geq \gdim M -s+1.
  \]
  Hence, combining these two inequalities, one gets:
  \[
    \level{\cat G}{M} \geq \depth R -\dim R/I +1.
  \]
\end{example}
  
This example gives another proof that the $\cat{G}$-level of the
Koszul complex on a sequence of parameters in a Cohen--Macaulay local
ring $R$ is exactly $\dim R+1$, cf.~\Cref{koszul-p}.

  \begin{example}
    Let $R$ be local and Cohen--Macaulay. If
    $\underline{x}\coloneqq x_1, \ldots, x_n$ is a generating set for
    a proper ideal $I$ of $R$, then \Cref{ex_lower_bound} yields
    \[
      \level{\cat G}{K(\underline{x};R) } \geq \dim R-\dim R/I+1.
    \]
    Further, if $\underline{x}$ forms a (partial) sequence of
    parameters for $R$, then
    \begin{equation*}
      \level{\cat G}{K(\underline{x}; R)}= \dim R -\dim R/I+1=n+1 
    \end{equation*}
    holds. Indeed, one has
    \[
      \level{\cat G}{K(\underline{x}; R)} \leq
      \level{R}{K(\underline{x}; R)} \leq n+1
    \]
    where the first equality holds as $R$ belongs to $\cat G$ and the
    second inequality is from the length of the Koszul complex, see
    \cite[Lemma 2.5.2]{Avramov/Buchweitz/Iyengar/Miller:2010}.
  \end{example}

  Finally, we remark that the $\cat{G}$- and $R$-levels of a Koszul
  complex can differ.

\begin{example}
  Let $R$ be local and Gorenstein of positive Krull dimension. Let $M$
  be a complex in $\catdbf$ with $s \coloneqq \sup \h[*]{M}$. If
  $\Gamma_{\mathfrak{m}} \h[s]{M}\neq 0$, then we have
  \[
    \level{\cat G}{M}=\dim R+1 \:.
  \]
  One inequality holds by \Cref{glevelbound} and the opposite
  inequality comes from Example \ref{ex_lower_bound}. Thus, for the
  Koszul complex $K$ on a set of generators of the maximal ideal of
  $R$ one has $\level{\cat G}{K}=\dim R+1$ while
  $\level{R}{M}=\operatorname{edim}R+1$ holds by \cite[Theorem
  4.2(1)]{Altmann/Grifo/Montano/Sanders/Vu:2017}. Thus if $R$ is not
  regular, then one has $\level{\cat G}{K} < \level{R}{K}$.
\end{example}


\section{Gorenstein injective levels}

\noindent
The next construction is dual to the Adams resolution in the sense of
J.D.~Christensen \cite{Christensen:1998}.

\begin{construction}
  \label{adams}
  Let $M$ be an $R$-complex. There is a graded injective $R$-module
  $I$ and an injective homomorphism
  $\bar{\iota}\colon \h{M} \hookrightarrow I$ of graded
  $R$-modules. By the graded-injectivity of $I$, this map lifts to a
  graded homomorphism $\co{M} \to I$. Considering $I$ as a complex
  with zero differential, this yields a morphism
  $\iota \colon M \to I$ of complexes.

  Set $\mho^0_R(M) \coloneqq M$ and
  $\mho^1_R(M) \coloneqq \cone(\iota)$. Applying the construction
  above recursively, set
  $\mho^{n+1}_R(M) \coloneqq \mho_R^1(\mho_R^n(M))$ for $n \geq
  1$. Taken together the ensuing triangles
  \begin{equation*}
    \mho^n(M) \xrightarrow{\iota^n} I^n \longrightarrow \mho^{n+1}(M) \longrightarrow
  \end{equation*}
  form an injective Adams resolution for $M$. They induce short exact
  sequences in homology
  \begin{equation*}
    0 \longrightarrow \h{\mho_R^n(M)} \longrightarrow I^n \longrightarrow \h{\mho^{n+1}(M)} \longrightarrow 0
  \end{equation*}
  for every $n \ge 0$.
\end{construction}

\begin{lemma}
  \label{SyzygyInequality}
  Let $\class{C}$ be a collection of objects in $\cat{D}$ that
  contains all injective $R$-modules. For every complex $M$ in
  $\catdb$ and every $n \ge 0$ there is an inequality
  \begin{equation*}
    \level{\class{C}} M \leq \level{\class{C}}\mho^n(M) + n 
  \end{equation*}
\end{lemma}

\begin{proof}
  It suffices to show that one has
  \begin{equation*}
    \level{\class{C}} M \leq \level{\class{C}}\mho^1(M) + 1 \:.
  \end{equation*}
  To this end consider the triangle
  \begin{equation*}
    M \overset{\iota^0}{\longrightarrow} I^0 \longrightarrow \mho^1(M) \longrightarrow
  \end{equation*}
  and notice that $I^0$ is a bounded complex of injective modules with
  zero differential. Now \cite[Lemma
  2.4(2)]{Avramov/Buchweitz/Iyengar/Miller:2010} yields the desired
  inequality.
\end{proof}

\begin{lemma}
  \label{Splice}
  Let $M \in \cat{D}$. For every integer $n \geq 1$ there is an exact
  sequence
  \begin{equation*}
    0 \longrightarrow \h{M}^\oplus \longrightarrow E^0 \longrightarrow E^1
    \longrightarrow  \cdots \longrightarrow E^{n-1}
    \longrightarrow \h{\mho^n(M)}^\oplus \longrightarrow 0
  \end{equation*}
  where each $E^i$ is an injective $R$-module.
\end{lemma}

\begin{proof}
  The claim holds since for every $i \ge 0$ there is an exact sequence
  of $R$-modules
  \begin{equation*}
    0 \longrightarrow \h{\mho^n(M)}^\oplus \longrightarrow (I^n)^\oplus
    \longrightarrow \h{\mho^{n+1}(M)}^\oplus \longrightarrow 0
  \end{equation*}
  from \Cref{adams}.
\end{proof}

\begin{chunk}
  \label{subquotientSES}
  In the next several results, we make use of the following exact
  sequences from \cite[Proposition
  2.2.12]{Christensen/Foxby/Holm:2024}:
  \begin{gather}
    \label{newtwoequation}
    0 \longrightarrow \cy[i]{M} \longrightarrow M_i \longrightarrow \bo[i-1]{M} \longrightarrow 0\\
    \label{newthreeequation}
    0 \longrightarrow \bo[i]{M} \longrightarrow M_i \longrightarrow \co[i]{M} \longrightarrow 0\\
    \label{newoneequation}
    0 \longrightarrow \h[i]{M} \longrightarrow \co[i]{M} \longrightarrow \bo[i-1]{M} \longrightarrow 0 \\
    \label{newzeroequation}
    0 \longrightarrow \bo[i]{M} \longrightarrow \cy[i]{M}
    \longrightarrow \h[i]{M} \longrightarrow 0
  \end{gather}
\end{chunk}

\begin{lemma}
  \label{finitegdimofZandB}
  Suppose $M$ is a left bounded $R$-complex such that $M_i$ and
  $\h[i]{M}$ have finite Gorenstein injective dimension for all
  $i$. Then $\bo[i]{M}$, $\cy[i]{M}$, and $\co[i]{M}$ have finite
  Gorenstein injective dimension for all $i$ as well.
\end{lemma}

\begin{proof}
  If $\cy[i]{M}$ has finite Gorenstein injective dimension, then the
  modules $\bo[i-1]{M}$ and $\co[i]{M}$ have finite Gorenstein
  injective dimension by \ref{subquotientSES}(\ref{newtwoequation})
  and \ref{subquotientSES}(\ref{newoneequation}), and then
  $\cy[i-1]{M}$ has finite Gorenstein injective dimension by
  \ref{subquotientSES}(\ref{newzeroequation}). The assertion thus
  follows by descending induction, as $\cy[i]{M} = 0$ for $i \gg 0$.
\end{proof}

\begin{proposition}
  \label{xfinitegdim}
  Let $M$ be a complex in $\catdb$. If the module $\h{M}^\oplus$ has
  finite Gorenstein injective dimension, then $M$ has finite
  Gorenstein injective dimension.
\end{proposition}

\begin{proof}
  Let $I$ be a semi-injective resolution of $M$.  Since every module
  $I_i$ has finite Gorenstein injective dimension, it follows from
  Lemma \ref{finitegdimofZandB} that $\cy[i]{I}$ has finite Gorenstein
  injective dimension for all $i$. Since $\h[i]{I}=0$ holds for
  $i \ll 0$, one has $\Gid{M} \le \Gid{\cy[i]{M}} - i$ for $i \ll 0$.
\end{proof}

In the following $\cat{GI}$ denotes the collection of Gorenstein
injective $R$-modules.

\begin{lemma}
  \label{finitegidimofBandC}
  Let $M$ be a left bounded complex of modules from $\cat{GI}$. If one
  has $\Gid{\h[i]{M}} \le 1$ for all $i$, then the modules $\bo[i]{M}$
  and $\co[i]{M}$ belong to $\cat{GI}$ for all $i$.
\end{lemma}

\begin{proof}
  If $\bo[i]{M}$ is in $\cat{GI}$, then $\co[i]{M}$ is in $\cat{GI}$
  by \ref{subquotientSES}(\ref{newthreeequation}), and per the
  assumption on $\h[i]{M}$ it follows from
  \ref{subquotientSES}(\ref{newzeroequation}) that $\bo[i-1]{M}$ is in
  $\cat{GI}$. The assertion thus follows by descending induction as
  one has $\bo[i]{M}=0$ for $i \gg 0$.
\end{proof}

The next theorem is dual to \Cref{mainglevelthm}; it does not follow
from work in \cite{AAITY-14} which only deals with resolving
categories.

\begin{proposition}
  \label{GIInequality}
  For every complex $M$ in $\catdb$ one has:
  \begin{equation*}
    \level{\cat{GI}}{M} \leq \max\{2, \Gid\h{M}^\oplus + 1\}
  \end{equation*}
\end{proposition}

\begin{proof}
  We may assume that $M$ is nonzero and that the Gorenstein injective
  dimension of $\h{M}^\oplus$ is finite; otherwise there is nothing to
  prove. Now the Gorenstein injective dimension of $M$ is finite by
  \Cref{xfinitegdim}. Thus, we may assume that $M$ is a bounded
  complex of Gorenstein injective modules.

  First, notice that if $\bo{M}^\oplus$ and $\co{M}^\oplus$ belong to
  $\cat{GI}$, then \cite[Lemma
  2.4(2)]{Avramov/Buchweitz/Iyengar/Miller:2010} applied to the exact
  sequence
  \[0 \longrightarrow \bo{M} \longrightarrow M\longrightarrow
    \Sigma\co{M} \longrightarrow 0 \] from
  \ref{subquotientSES}(\ref{newthreeequation}) yields
  \begin{align*}
    \level{\cat{GI}}{M} & \leq \level{\cat{GI}}{\bo{M}} +
                          \level{\cat{GI}}{\co{M}} \\
                        & = \level{\cat{GI}}{\bo{M}^\oplus} +    \level{\cat{GI}}{\co{M}^\oplus} \\
                        & \le 2\,,
  \end{align*}
  where the equality holds as the complexes $\bo{M}$ and $\co{M}$ have
  zero differentials. In particular, the asserted inequality holds by
  \Cref{finitegidimofBandC} in case $\Gid{\h{M}}^{\oplus} \le 1$.

  For $n \coloneqq \Gid{\h{M}^\oplus} > 1$ we see that
  $\Gid{\h{\mho^{n-1}_R(M)}^\oplus}=1$ holds by \Cref{Splice}, so the
  modules $\bo{\mho^{n-1}_R(M)}^{\oplus}$ and
  $\co{\mho^{n-1}_R(M)}^{\oplus}$ belong to $\cat{GI}$ by
  \Cref{finitegidimofBandC}. By the argument above, one now has
  $\level{\cat{GI}}{\mho^{n-1}_R(M)} \le 2$, so
  \Cref{SyzygyInequality} yields $\level{\cat{GI}}{M} \le n+1$.
\end{proof}

\begin{remark}
  \label{GIattain}
  The upper bound on $\cat{GI}$-levels provided in \Cref{GIInequality}
  can be attained: Indeed, for an $R$-module $M$ one has
  $\level{\cat{GI}}{M} = \Gid{M}+1$ by \cite[Lemma
  2.5.2]{Avramov/Buchweitz/Iyengar/Miller:2010} and work of Mifune
  \cite[Corollary 4.3]{LB2}. Thus the bound is attained by every
  $R$-module of positive Gorenstein injective dimension, and one can
  thus rewrite the bound as
  \begin{equation*}
    \level{\cat{GI}}{M} \leq \max\{2, \level{\cat{GI}}{\h{M}^\oplus}\} \,.
  \end{equation*}
  Next we show that a complex $M$ with $\h{M}^\oplus$ in $\cat{GI}$
  may have $\cat{GI}$-level $2$.
\end{remark}

\begin{example}
  \label{GIexa}
  Let $k$ be a field and consider the local ring
  $R \coloneqq k[x]/(x^2)$. Let $K$ be the Koszul complex on $x$; as
  $R$ is artinian and Gorenstein, $\h{K}^\oplus$ is a Gorenstein
  injective $R$-module.  By \Cref{GIInequality} one has
  $\level{\cat{GI}}{K} \le 2$. If the $\cat{GI}$-level of $K$ were
  $1$, then $K$ would be isomorphic to $\h{K}$ in the derived
  category, which as shown in \cite[Example
  3.10]{Awadalla/Marley:2022} is absurd. Thus
  $\level{\cat{GI}}{K} = 2$ holds.
\end{example}

The following is an immediate consequence of \Cref{GIInequality}:

\begin{theorem}
  \label{glevelbound-I}
  Let $R$ be Gorenstein and $M$ a complex in $\catdb$. One has
  \[\level{\cat{GI}}{M} \le \max\{2, \dim{R}+1\}\,.\]
\end{theorem}

As another application of \Cref{GIInequality} we prove a formula,
reminiscent of the Bass Formula, for Gorenstein injective level. The
assumption that $R$ has positive depth is necessary, for $R$ and $K$
as in \Cref{GIexa} one has $\level{\cat{GI}}{K} = 2$.

\begin{theorem}
  \label{GIBass}
  Let $R$ be a commutative noetherian local ring of positive depth and
  $M$ a complex in $\catdbf$. If $\Gid \h{M}^\oplus$ is finite, then
  one has
  \begin{equation*}
    \level{\cat{GI}}{M} = \depth R + 1
  \end{equation*}
\end{theorem}

\begin{proof}
  By Theorem~\ref{GIInequality}, we have
  $\level{\cat{GI}}{M} \leq \max\{2, \Gid \h{M}^\oplus + 1\}$. Since
  $\h{M}^\oplus$ is a finitely generated $R$-module, the Bass Formula
  for Gorenstein injective dimension, see \cite[Corollary
  19.2.14]{Christensen/Foxby/Holm:2024}, yields
  $\Gid \h{M}^\oplus = \depth R$. By assumption the depth of $R$ is
  positive, so one now has $\level{\cat{GI}}{M} \leq \depth R + 1$.

  For the opposite inequality, note first that $\Gid{M}$ is finite by
  \Cref{xfinitegdim}. Per \cite[Corollary 4.3]{LB2} one has
  \begin{equation*}
    \level{\cat{GI}}{M} \geq \Gid M + \inf M + 1 \:,
  \end{equation*}
  and $\Gid M + \inf M = \depth R$ holds by \cite[Theorem
  19.2.40]{Christensen/Foxby/Holm:2024}.
\end{proof}

We close out this section by remarking that that there is a Bass
formula for levels with respect to the class of injective $R$-modules.
Indeed, the proof of \cite[Theorem
5.5]{Avramov/Buchweitz/Iyengar/Miller:2010} dualizes to give an upper
bound on $\cat{I}$-levels, where $\cat{I}$ denotes the class of
injective $R$-modules.

\begin{proposition}
  \label{IInequality}
  For every complex $M$ in $\catdb$ one has:
  \begin{equation*}
    \level{\cat{I}}{M} \leq \idim{\h{M}^\oplus} + 1 \:.
  \end{equation*}
\end{proposition}

\begin{theorem}
  \label{Bass}
  Let $R$ be a noetherian local ring and $M$ a complex in $\catdbf$.
  If $\idim{\h{M}^\oplus}$ is finite, then one has
  \begin{equation*}
    \level{\cat{I}}{M} = \depth R+1 \:.
  \end{equation*}
\end{theorem}

\begin{proof}
  One has $\level{\cat{I}}{M} \le \depth R+1$ by \Cref{IInequality}
  and the classic Bass formula; the opposite inequality holds by
  \cite[Corollary 4.3]{LB2}.
\end{proof}


\section{Flat and Gorenstein flat levels}

Like \Cref{mainglevelthm} the next result follows from \cite[Theorem
4.1]{AAITY-14}.

\begin{proposition}
  \label{CInequality}
  Let $\class{C}$ be the class of flat $R$-modules, Gorenstein
  projective $R$-modules, or Gorenstein flat $R$-modules. For every
  complex $M$ in $\catdb$ one has
  \begin{equation*}
    \level{\class{C}}M \leq \max\{2, \class{C}\textnormal{-}\dim_R \h{M}^\oplus + 1\}
  \end{equation*}
  where $\class{C}$-$\dim_R$ means flat dimension, Gorenstein
  projective dimension, or Gorenstein flat dimension, respectively.
\end{proposition}

Like the bounds in Propositions~\ref{mainglevelthm} and
\ref{GIInequality} the ones provided above can be attained. First
consider the case where $\class{C}$ is the class $\cat{GP}$ of
Gorenstein projective $R$-modules. For every $R$-module $M$ one has
$\level{\cat{GP}}{M} = \GPdim{M} + 1$ by \cite[Corollary
3.4]{Awadalla/Marley:2022}. Further, for $R$ and $K$ as in
\Cref{GIexa}, the argument in \cite[Example
3.10]{Awadalla/Marley:2022} shows that while $\h{K}^\oplus$ is in
$\cat{GP}$ one has $\level{\cat{GP}}{K} = 2$.

\begin{lemma}
  \label{Ceq}
  Let $\class{C}$ be the class of flat $R$-modules or Gorenstein flat
  $R$-modules. For every $R$-module $M$ one has
  \begin{equation*}
    \level{\class{C}}M = \class{C}\textnormal{-}\dim_R M + 1 \:,
  \end{equation*}
  where $\class{C}$-$\dim_R$ means flat dimension dimension or
  Gorenstein flat dimension.
\end{lemma}

\begin{proof}
  First we consider the case where $\class{C}$ is the class $\cat{F}$
  of flat $R$-modules. Note that the inequality
  $\level{\cat{F}}{M}\leq \fdim[R]{M} +1$ holds by \cite[Lemma
  2.5.2]{Avramov/Buchweitz/Iyengar/Miller:2010}. For the other
  inequality, let $E$ be a faithfully injective $R$-module. By
  \cite[Lemma 2.4(6)]{Avramov/Buchweitz/Iyengar/Miller:2010} there is
  an inequality,
  \begin{equation*}
    \level{\Hom{\cat{F}}{E}}\Hom{M}{E} \leq \level{\cat{F}}{M} \:.
  \end{equation*}
  For every flat $R$-module $F$, the module $\Hom{F}{E}$ is injective
  by flat--injective duality. Thus one has
  \begin{equation*}
    \level{\cat{I}}{\Hom{M}{E}} \leq \level{\Hom{\cat{F}}{E}}{\Hom{M}{E}} \:.
  \end{equation*}
  As $\level{\cat{I}}{\Hom{M}{E}} = \idim{\Hom{M}{E}} + 1$ holds by
  \Cref{GIattain} and one has $\idim\Hom{M}{E} = \fdim{M}$ by
  flat-injective duality, the asserted inequality follows.

  The same argument applies when $\class{C}$ is the class of
  Gorenstein flat $R$-modules; only one refers to \cite[Proposition
  9.3.20]{Christensen/Foxby/Holm:2024} in place of flat--injective
  duality.
\end{proof}

\begin{remark}
  In the flat and Gorenstein flat cases, the upper bound established
  in \Cref{CInequality} can be attained, as
  $\level{\class{C}}M = \class{C}\textnormal{-}\dim_RM + 1$ holds for
  every $R$-module $M$ by \Cref{Ceq} .

  Next we construct a complex $M$ with
  $\level{\cat{F}}{M} = 2 = \level{\cat{GF}}{M}$ though the module
  $\h{M}^\oplus$ is flat and, hence, Gorenstein flat.
\end{remark}

\begin{example}
  Set $R \coloneqq \mathbb{R}[x, y, z]$ and let $Q$ be the field of
  fractions of $R$. As $R$ is regular, the projective dimension of an
  $R$-module is at most $3$, and every Gorenstein flat $R$-module is
  flat; in symbols $\cat{GF} = \cat{F}$. By a result of Osofsky
  \cite{BLO68}, the projective dimension of the flat $R$-module $Q$ is
  at least two.  Let $P$ be a free resolution of $Q$, and consider the
  hard truncation of $P$ at degree 1, denoted $P_{\leq 1}$. One has
  $\h[0]{P_{\leq 1}} \cong Q$ and
  $\h[1]{P_{\leq 1}} \cong \Omega_R^2(Q)$, the second syzygy of $Q$
  which is also a flat $R$-module. Thus, $\h{P_{\leq 1}}^\oplus$ is
  flat, whence \Cref{CInequality} yields
  $\level{\cat{F}}P_{\leq 1} \leq 2$; we claim that equality holds.
  Assume towards a contradiction that $\level{\cat{F}} P_{\leq 1} = 1$
  holds. In the derived category, $P_{\leq 1}$ is now isomorphic to
  its homology. As $P_{\leq 1}$ is semi-projective, there is a
  quasi-isomorphism $\pi$ from $P_{\leq 1}$ to its homology:
  \begin{center}
    \begin{tikzcd}
      0 \arrow[r] & P_1 \arrow[r]\arrow[d, "\pi_1"] & P_0 \arrow[r]\arrow[d, "\pi_0"] & 0 \\
      0 \arrow[r] & \Omega^2_R(Q) \arrow[r, "0"] & Q \arrow[r] & 0
    \end{tikzcd}
  \end{center}
  As $\pi$ is a quasi-isomorphism, its mapping cone is acyclic. Thus,
  the sequence
  \begin{equation*}
    0 \longrightarrow P_1 \longrightarrow P_0 \oplus \Omega^2_R(Q) \longrightarrow Q \longrightarrow 0
  \end{equation*}
  is exact.  One has $\pdim{P_0 \oplus \Omega_R^2(Q)} = \pdim{Q} - 2$
  as $\Omega_R^2(Q)$ is a second syzygy of $Q$. Since the projective
  dimension of $Q$ is at most 3, the projective dimension of
  $P_0 \oplus \Omega_R^2(Q)$ is at most 1. However, from the exact
  sequence above one gets
  \begin{align*}
    \pdim{Q} 	&\leq \max\{\pdim{P_1}+1, \pdim{P_0 \oplus \Omega_R^2(Q)}\} \\
		&\leq \max\{1, 1\} \\
		&\leq 1
  \end{align*}
  by \cite[Corollary 8.1.9]{Christensen/Foxby/Holm:2024}. This is a
  contradiction, and so $\level{\cat{F}} Q = 2$.
\end{example}

\section*{Acknowledgments}

\noindent
We thank Srikanth Iyengar, Janina Letz, Tom Marley, Josh Pollitz and
Ryan Watson for helpful conversations. We also thank Ryo Takahashi for
alerting us that \Cref{mainglevelthm}, for which we had originally
included a proof, follows from \cite[Theorem 4.1]{AAITY-14}.


\providecommand{\bysame}{\leavevmode\hbox to3em{\hrulefill}\thinspace}
\providecommand{\MR}{\relax\ifhmode\unskip\space\fi MR }
\providecommand{\MRhref}[2]{%
  \href{http://www.ams.org/mathscinet-getitem?mr=#1}{#2} }
\providecommand{\href}[2]{#2}

\end{document}